\newtheorem{proposition}{Proposition}
\newtheorem{theorem}{Theorem}
\journal{\rm This is a preprint published Open Access in 'Applied Numerical Mathematics'
[{\tt https://doi.org/10.1016/j.apnum.2024.09.014}].}
\begin{document}

\begin{frontmatter}

\title{Exact solution for a discrete-time SIR model}

\author[1]{M\'{a}rcia Lemos-Silva}
\ead{marcialemos@ua.pt}
\ead[url]{https://orcid.org/0000-0001-5466-0504}

\author[2]{Sandra Vaz}
\ead{svaz@ubi.pt}
\ead[url]{https://orcid.org/0000-0002-1507-2272}

\author[1,3]{Delfim F. M. Torres\corref{cor1}}
\ead{delfim@ua.pt}
\ead{delfim@unicv.cv}
\ead[url]{https://orcid.org/0000-0001-8641-2505}

\cortext[cor1]{Corresponding author: delfim@ua.pt}


\address[1]{Center for Research and Development in Mathematics and Applications (CIDMA),\newline 
Department of Mathematics,
University of Aveiro,
3810-193 Aveiro, Portugal}

\address[2]{Center of Mathematics and Applications (CMA-UBI), 
Department of Mathematics,\newline
University of Beira Interior,
6201-001 Covilh\~{a}, Portugal}

\address[3]{Research Center in Exact Sciences (CICE), 
Faculty of Sciences and Technology (FCT),\newline
University of Cape Verde (Uni-CV),
7943-010 Praia, Cabo Verde}


\begin{abstract}
We propose a nonstandard finite difference scheme for the 
Susceptible--Infected--Removed (SIR) continuous model. 
We prove that our discretized system is dynamically consistent 
with its continuous counterpart and we derive its exact solution. 
We end with the analysis of the long-term behavior of susceptible, 
infected and removed individuals, illustrating 
our results with examples. In contrast
with the SIR discrete-time model available in the literature,
our new model is simultaneously mathematically and biologically sound.
\end{abstract}

\begin{keyword}
Compartmental models 
\sep Nonlinear epidemiological systems 
\sep Mickens method 
\sep Local stability
\sep Exact solution of a SIR discrete-time system
\sep Consistent discretization

\MSC[2020]{65L10 \sep 65L12 (Primary) 39A30 \sep 92-10 (Secondary)}
\end{keyword}

\end{frontmatter}


\section{Introduction}
\label{sec1}

Compartmental models are widely used in mathematical modeling of infectious diseases. 
Modeling such diseases became a topic of great interest when in 1927 
Kermack and McKendrick presented their SIR model \cite{kermack:mckendrick} 
that aimed to study the spread of infectious diseases in a population divided into three groups: 
susceptible, infected, and removed, by tracking the evolution of each group over time. 
Since then, compartmental models have been widely used in the field of epidemiology, biology, 
economy, among others. Nowadays, these models are still of great importance and have undergone 
remarkable advances, with the development of more sophisticated models that better capture 
the complex dynamics of various diseases, see, e.g., 
\cite{LoliPiccolomini2020,MR3409181,vaz:torres1,vaz:torres}.  

In general, the vast majority of these compartmental models are governed by nonlinear 
differential equations for which no exact solutions are known. To overcome this problem, 
there are several numerical methods one can apply to convert such models to their 
discrete counterparts in order to find suitable approximated solutions. However, 
standard difference schemes, such as Euler and Runge--Kutta methods, 
often fail to solve nonlinear systems in a consistent way \cite{consistent}. 
Thus, it is necessary to adopt a scheme that ensures that no numerical instabilities occur. 
Nonstandard finite discrete difference (NSFD) schemes can be used to eliminate 
these instabilities: see, e.g., Mickens' book \cite{mickens:book}. 
This is possible because there are some designed 
laws that systems must satisfy in order to preserve the qualitative properties of the continuous model, 
such as non-negativity, boundedness, stability of the equilibrium points, and others \cite{mickens1}. 
The literature on Mickens-type NSFD schemes is now vast \cite{mickens:applied,vaz:torres1,vaz:torres}.
In particular, we can find many results on NSFD  
to solve biomath models. For example,
in \cite{MR4712579} the authors show that
the NSFD scheme applies directly for mass action-based models 
of biological and chemical processes, while in \cite{MR4692483}
the Mickens' method is used to construct a dynamically consistent 
second-order NSFD scheme for the general 
Rosenzweig--MacArthur predator-prey model,
and in \cite{MR4696804} for a SIR epidemic model.

While for the most realistic models it is impossible 
to find their solutions analytically, in some cases
the exact solution can be obtained 
\cite{scales:solution,gleissner:solution}. 
Recently, the epidemic SIR continuous differential system model
\cite{bailey} was extended to time-dependent coefficients 
and its exact solution computed \cite{scales:solution}.
More interesting, a new discrete-time SIR model was also proposed 
with an exact solution. Unfortunately, the discrete-time model of
\cite{scales:solution} may present negative solutions even when
the initial conditions are positive, which has no biological meaning.
Here we propose a new NSFD discrete-time model that is consistent
with the continuous SIR model of \cite{bailey}. Although we are 
considering one of the simplest models regarding the spread of 
infectious diseases, our model is new, non-classical, and non-standard. 
More precisely, the discrete-time model we propose here corrects 
the recent discrete-time model of \cite{scales:solution}.
Our proposed model is obtained using the non-standard
approach of Mickens \cite{MR4721474}.
Unlike almost all discrete-time models, 
it has an exact solution, and, in contrast with 
\cite{scales:solution}, we are able to prove that all its solutions 
remain non-negative.

This paper is organized as follows. In Section~\ref{sec2}, 
a brief state of the art for continuous and discrete time 
SIR models with exact explicit solutions is presented.  
Our results are then given in Section~\ref{sec3}: 
we propose a new discrete-time autonomous SIR model proving its dynamical consistence. 
In concrete, we prove the non-negativity and boundedness of its solution
(Proposition~\ref{thm:01}), an explicit formula of the exact solution 
(Theorem~\ref{thm:2}), and we investigate the convergence of the model 
under study to the equilibrium points (Theorems~\ref{convergence:zero} 
and \ref{thm:04}). Some illustrative examples that support the obtained 
results are given in Section~\ref{sec4}. We end with Section~\ref{sec5}
of conclusions and some possible directions for future work.


\section{State of the art and our main goal}
\label{sec2}

In \cite{bailey}, Bailey proposed the following Susceptible--Infected--Removed (SIR) model:
\begin{equation}
\label{sir:continuous}
\begin{cases}
x' = -\dfrac{b x y}{x+y}, \\
y' = \dfrac{b x y}{x+y} - cy, \\
z' = cy,
\end{cases}
\end{equation}
where $b, c \in \mathbb{R}^+$, $x(t_0) = x_0 > 0$, $y(t_0) = y_0 > 0$ 
and $z(t_0) = z_0 \geq 0$ for some $t_0 \in \mathbb{R}_0^+$.
Moreover, $x, y, z : \mathbb{R}_{t_0} 
\rightarrow \mathbb{R}^+_0$, where 
$\mathbb{R}_{t_0} := \left\{t \in \mathbb{R} : t \geq t_0\right\}$.
This system is a compartmental model where $x$ 
represents the number of susceptible individuals, 
$y$ the number of infected individuals, and $z$ the number of removed individuals. 
Note that by adding the right-hand side of equations \eqref{sir:continuous}
one has $x'(t) + y'(t) + z'(t) = 0$, that is, $x(t) + y(t) + z(t) = N$ for all $t$,
where the constant $N := x_0 + y_0 + z_0$ denotes the total population under study.
This means that it is enough to solve the first two equations of system 
\eqref{sir:continuous}: after we know $x(t)$ and $y(t)$, we can immediately compute 
$z(t)$ using the equality $z(t) = N - x(t) - y(t)$.

The exact solution of \eqref{sir:continuous} has already been formulated in \cite{gleissner:solution}. 
Assuming $x, y > 0$ and $b, c \in \mathbb{R}^{+}$, the solution is given by 
\begin{equation}
\label{continuous:solution}
\begin{cases}
x(t) = x_0(1 + \kappa)^{\frac{b}{b-c}}(1 + \kappa e^{(b-c)(t-t_0)})^{-\frac{b}{b-c}},\\
y(t) = y_0(1 + \kappa)^{\frac{b}{b-c}}(1 + \kappa e^{(b-c)(t-t_0)})^{-\frac{b}{b-c}}e^{(b-c)(t-t_0)},\\
z(t) = N - (x_0 + y_0)^{\frac{b}{b-c}}(x_0 + y_0e^{(b-c)(t-t_0)})^{-\frac{c}{b-c}},
\end{cases}
\end{equation} 
where $\kappa = \frac{y_0}{x_0}$. The solution \eqref{continuous:solution}
of system \eqref{sir:continuous} is obtained by solving
the first two equations and then using
the constant population property to infer $z$ \cite{gleissner:solution}. 

The authors of \cite{scales:solution} proposed a different method to solve \eqref{sir:continuous}, 
considering the autonomous and non-autonomous model. In the latter case,  
$b, c: \mathbb{R} \rightarrow \mathbb{R}^+$ and the solution can be written as 
\begin{equation}
\label{continuous:solution2}
\begin{cases}
x(t) = x_0\exp\left\{-\kappa\bigintsss_{t_0}^{t} b(s) \left(\kappa 
+  e^{\int_{t_0}^{s} (c-b)(\tau) d\tau} \right)^{-1} ds\right\},\\
y(t) = y_0\exp\left\{\bigintsss_{t_0}^{t} \left(b(s) \left(1 
+ \kappa e^{\int_{t_0}^{s} (c-b)(\tau) d\tau}\right)^{-1} - c(s) \right)ds\right\},\\
z(t) = N - \left(y_0 e^{\int_{t_0}^{t} (b-c)(s)ds} + x_0\right)
\exp\left\{-\kappa\bigintsss_{t_0}^t b(s)\left(\kappa 
+ e^{\int_{t_0}^{s} (c-b)(\tau)d\tau}\right)^{-1}ds\right\},
\end{cases}
\end{equation}
which is identical to \eqref{continuous:solution} if $b,c \in \mathbb{R}^{+}$. 

Moreover, in \cite{scales:solution} the authors 
also present a discrete-time dynamic SIR model given by
\begin{equation}
\label{sir:discrete:dynamic}
\begin{cases}
x(t+1) = x(t) - \frac{b(t)x(t)y(t+1)}{x(t)+y(t)},\\
y(t+1) = y(t) + \frac{b(t)x(t)y(t+1)}{x(t)+y(t)} - c(t)y(t+1),\\
z(t+1) = z(t) + c(t)y(t+1),
\end{cases}
\end{equation}
with $x(t_0) = x_0 > 0$, $y(t_0) = y_0 > 0$ and $z(t_0) = z_0 \geq 0$,
for some $t_0 \in \mathbb{R}_0^+$,
and $t \in \mathbb{Z}_{t_0} := \left\{t_0, t_0+1, t_0+2, \ldots \right\}$, 	
obtaining its exact solution. In this case, the time step is considered to be $h = 1$, 
and then the state at time $t$ is followed by the state at time $t+1$. Here we call attention 
to the fact that system \eqref{sir:discrete:dynamic} fails to guarantee the non-negativity of solutions, which should
be inherent to any epidemiological model, meaning that \eqref{sir:discrete:dynamic} has no biological relevance. 
For example, let $x_0 = 0.6$, $y_0 = 0.4$ and $z_0 = 0$ with 
$b(t) \equiv 1.5$ and $c(t) \equiv 0.1$. Then, $x(1) = -1.2$, which does not make sense.
More generally, the following result holds.

\begin{proposition}
Let $x_0 > 0$, $y_0 > 0$ and $z_0 \geq 0$. If $b(t) > 1 + c(t)$ for all $t$, 
then there exists a $\tau > t_0$ such that the solution 
$x$ of \eqref{sir:discrete:dynamic} is negative, that is, $x(\tau) < 0$.
\end{proposition}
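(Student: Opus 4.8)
The plan is to rewrite \eqref{sir:discrete:dynamic} as an explicit map, to read off a change of sign forced by $b(t)>1+c(t)$, and then to reduce the remaining case to a one-dimensional recursion.

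\emph{Step 1 (explicit one-step map).} The middle equation of \eqref{sir:discrete:dynamic} is linear in $y(t+1)$, so solving it gives
\[
y(t+1)=\frac{y(t)\,(x(t)+y(t))}{(1+c(t))\,(x(t)+y(t))-b(t)\,x(t)},
\]
and inserting this into the first equation yields
\[
x(t+1)=x(t)\,\frac{(1+c(t)-b(t))\,(x(t)+y(t))}{(1+c(t))\,(x(t)+y(t))-b(t)\,x(t)} .
\]
Both identities hold while $D(t):=(1+c(t))(x(t)+y(t))-b(t)x(t)\neq 0$, which I assume throughout (if $D(t)=0$ the solution is not even defined). Dividing them I obtain the key identity
\[
\frac{y(t+1)}{x(t+1)}=\frac{y(t)}{(1+c(t)-b(t))\,x(t)},
\]
valid whenever $x(t)\neq 0$ and $x(t+1)\neq 0$.

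\emph{Step 2 (forced change of sign).} Since $b(t)>1+c(t)$, the factor $1+c(t)-b(t)$ is strictly negative, so the identity of Step~1 shows that the product $x(t)\,y(t)$ strictly changes sign at each step. From $x_0y_0>0$ we get $x(t_0+1)\,y(t_0+1)<0$; and, at $t=t_0$, each of the two numerators above is a nonzero multiple of $x_0+y_0>0$, so $x(t_0+1)$ and $y(t_0+1)$ are both nonzero. Hence either $x(t_0+1)<0$ --- in which case $\tau=t_0+1$ works --- or $x(t_0+1)>0$ and $y(t_0+1)<0$.

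\emph{Step 3 (the remaining case, and the main difficulty).} Suppose now $x(t_0+1)>0>y(t_0+1)$. Using the conserved total $x(t)+y(t)+z(t)=N$, put $S(t):=x(t)+y(t)=N-z(t)$ and $p(t):=x(t)/S(t)$; the map of Step~1 then becomes the fractional-linear recursion
\[
p(t+1)=p(t)\,\frac{1+c(t)-b(t)}{1-(b(t)-c(t))\,p(t)},
\]
whose only nonzero fixed point is $p^{*}=1$, i.e.\ the disease-free state $y=0$. One checks that, when $x(t)>0$, the sign of $x(t+1)$ is opposite to that of $1+c(t)-b(t)p(t)$, and that whenever $x(t)>0>y(t)$ the sign of $x(t+1)$ equals that of $S(t)$. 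Thus $x$ becomes negative at the next step as soon as $p(t)<(1+c(t))/b(t)$ at a state with $x(t),y(t)>0$, or $S(t)<0$ at a state with $x(t)>0>y(t)$; so it suffices to show that the orbit cannot stay forever among the ``surviving'' states --- the $(x,y)$-positive ones with $p(t)\in\big((1+c(t))/b(t),\,1\big)$ and the $(x>0>y)$-ones with $S(t)>0$. Since $p^{*}=1$ is the only fixed point of the associated two-step map, this in turn reduces to ruling out that the values of $p$ recorded at successive surviving states converge to $1$. I expect this last point to be the main obstacle: the recursion is nonautonomous and fractional-linear, so no off-the-shelf stability theorem applies, and one must couple the sign bookkeeping of Step~2 with a monotonicity estimate for $p(t)$. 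Moreover the obstruction is genuine --- convergence to $p^{*}=1$, with $x$ staying positive, is exactly what occurs when $b(t)-c(t)\geq 2$ --- so the argument seems to require, in addition, the assumption $b(t)<2+c(t)$, equivalently that the disease-free fixed point of the reduced recursion be repelling.
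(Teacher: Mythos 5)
Your Steps 1 and 2 are correct and already go further than the paper's own argument, which consists of exactly your Step 1: the paper derives $x(t+1)=x(t)-\frac{b(t)x(t)y(t)}{(1+c(t))(x(t)+y(t))-b(t)x(t)}$ and then simply asserts that ``$x(t+1)<0$ is equivalent to $b(t)>1+c(t)$''. That assertion is valid only when the denominator $D(t_0)=(1+c(t_0))(x_0+y_0)-b(t_0)x_0$ is positive, i.e.\ when $x_0/(x_0+y_0)<(1+c(t_0))/b(t_0)$; rewriting the step as $x(t+1)=x(t)\,(1+c(t)-b(t))\,(x(t)+y(t))/D(t)$, as you do, makes this plain, and your Step 2 correctly isolates the complementary case $x(t_0+1)>0>y(t_0+1)$, which the paper silently ignores. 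So the paper's proof has a gap at precisely the point where your argument stops.

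Your Step 3, however, is not a proof --- you concede as much --- and your suspicion that the gap is unfillable is justified: the proposition is false as stated. Take $b\equiv 3$, $c\equiv 0.1$ (so $b>1+c$), $x_0=0.99$, $y_0=0.01$, $z_0=0$. Direct iteration of \eqref{sir:discrete:dynamic} gives $x(1)\approx 1.0059$, $y(1)\approx -0.0053$, then $x(2)\approx 0.9975$, $y(2)\approx 0.0028$, with $p(2)\approx 0.9972$. For your reduced map $G(p)=\frac{(1+c-b)p}{1-(b-c)p}$ one has $G'(1)=-1/(b-c-1)$, so $p^{*}=1$ is attracting exactly when $b-c>2$ (here $|G'(1)|=1/1.9<1$; at $b-c=2$ the fixed point is only neutral, so your threshold ``$b-c\geq 2$'' should be strict). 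The even iterates of $p$ therefore increase monotonically to $1$ while remaining in $\left((1+c)/b,\,1\right)=(11/30,\,1)$, and your own sign bookkeeping (a state with $x,y>0$ and $p\in((1+c)/b,1)$ maps to one with $x>0>y$, $S>0$, $p>1$, which maps back) shows $x(t)>0$ for every $t$: no $\tau$ with $x(\tau)<0$ exists. The statement can be repaired either by adding the hypothesis $(1+c(t_0))(x_0+y_0)>b(t_0)x_0$ --- under which the one-step computation immediately gives $x(t_0+1)<0$ and the paper's proof becomes correct --- or by additionally requiring $b(t)<2+c(t)$ so that $p^{*}=1$ is repelling (which would still need the escape argument you sketch); as written, neither your proposal nor the paper proves it, because it does not hold for all positive initial data.
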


\begin{proof}
From the second equation of \eqref{sir:discrete:dynamic}, we obtain that
$$
y(t+1) = \frac{y(t)\left(x(t)+y(t)\right)}{(1+c(t)) \left(x(t)+y(t)\right) - b(t) x(t)}.
$$
Substituting this expression into the first equation of system \eqref{sir:discrete:dynamic},
direct calculations show that
$$
x(t+1) = x(t)-\frac{b(t)x(t)y(t)}{\left(1+c(t)\right)\left(x(t)+y(t)\right)-b(t)x(t)}.
$$
We conclude that $x(t+1) < 0$ is equivalent to $b(t) > 1 + c(t)$.
\end{proof}

It is the main goal of the present work to provide a new discrete-time model
for \eqref{sir:continuous}, alternative to \eqref{sir:discrete:dynamic},
also with an exact solution but, in contrast with \eqref{sir:discrete:dynamic}, 
with biological meaning, that is, with a non-negative solution for any non-negative
values of the initial conditions.


\section{The discrete-time SIR model}
\label{sec3}

When discretizing a system it is crucial to choose a method that generates a discrete-time system 
whose properties and qualitative behavior is identical to its continuous counterpart. Here we present 
a dynamically consistent Nonstandard Finite Difference (NSFD) scheme for system \eqref{sir:continuous}, 
constructed by following the rules stated by Mickens in \cite{mickens,mickens1}. According to \cite{mickens1}, 
a dynamically consistent NSFD scheme is constructed by following two fundamental rules:
\begin{enumerate}
\item The first order derivative $x'$ is approximated by 
\begin{equation*}
x' \rightarrow \frac{x_{n+1} - \psi(h) x_n}{\phi(h)},
\end{equation*}
where $x_{n}$ and $x_{n+1}$ describe the state of the variable $x$ at times 
$n$ and $n+1$, respectively, and $h$ denotes the time step. Moreover,
$\psi(h)$ and $\phi(h)$ satisfy $\psi(h) = 1 + \mathcal{O}(h)$ 
and $\phi(h) = h + \mathcal{O}(h^2)$, respectively. The same notions are applied 
to the other variables present on system \eqref{sir:continuous}.

\item Both linear and nonlinear terms of the state variables 
and their derivatives may need to be substituted by nonlocal forms. For example,
\begin{equation*}
xy \rightarrow x_{i+1}y_i, \quad x^2 \rightarrow x_{i+1}x_i,
\quad x^2 = 2x^2 - x^2 \rightarrow 2(x_i)^2 - x_{i+1}x_i.
\end{equation*}
\end{enumerate}

In the majority of works on this topic, one can find that 
$\psi(h)$ is chosen to be simply 1. We also make that choice here. 
Throughout our work, the denominator function will be $\phi(h)=h$. 

Our proposal for a NSFD scheme for system \eqref{sir:continuous} is 
\begin{equation}
\label{sir:discrete}
\begin{cases}
\dfrac{x_{n+1} - x_n}{h} = -\dfrac{bx_{n+1}y_n}{x_n + y_n},\\
\dfrac{y_{n+1} - y_n}{h} = \dfrac{bx_{n+1}y_n}{x_n + y_n} - cy_{n+1},\\
\dfrac{z_{n+1} - z_n}{h} = cy_{n+1},
\end{cases}
\end{equation}
where 
\begin{equation}
\label{sir:disc:assump}
b, c \in \mathbb{R}^+, 
\quad x(t_0) = x_0 > 0, 
\quad y(t_0) = y_0 > 0, 
\quad \text{and} \quad 
z(t_0) = z_0 \geq 0. 
\end{equation}
System \eqref{sir:discrete}
can be rewritten, in an equivalent way, as
\begin{equation}
\label{sir:discrete:explicit}
\begin{cases}
x_{n+1} = \dfrac{x_n(x_n + y_n)}{x_n + y_n(1 + bh)},\\ 
y_{n+1} = \dfrac{y_n(1 + bh)(x_n + y_n)}{(1 + ch)(x_n + y_n(1 + bh))},\\
z_{n+1} = \dfrac{ch y_n(1 + bh)(x_n + y_n)}{(1 + ch)(x_n + y_n(1 + bh))} + z_n.
\end{cases}
\end{equation}
In the sequel we define the total population $N$ by 
$$
N := x_{0}+y_{0}+z_{0}.
$$
Observe that $N > 0$.


\subsection{Non-negativity and boundedness of solutions}
\label{subsec1}

One of the most important properties regarding epidemiological models is the need 
to keep all the solutions non-negative over time. Also, in this particular case, 
the total population $N = x + y + z$ must remain constant. Thus, we prove our first result.

\begin{proposition}
\label{thm:01}	
All solutions of \eqref{sir:discrete}--\eqref{sir:disc:assump}
remain non-negative for all $n >0$. Moreover, the discretized scheme  
\eqref{sir:discrete} guarantees that the population remains constant over time:
$x_{n}+y_{n}+z_{n} = N$ for all $n \in \mathbb{N}$.
\end{proposition}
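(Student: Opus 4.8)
The plan is to prove both assertions by induction on $n$, working with the already-solved (explicit) form \eqref{sir:discrete:explicit} rather than the implicit scheme \eqref{sir:discrete}, since in \eqref{sir:discrete:explicit} each new iterate is written as a ratio of manifestly signed quantities.

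For the non-negativity, I would take as induction hypothesis the slightly stronger statement $x_n > 0$, $y_n > 0$, $z_n \geq 0$; the base case $n = 0$ is exactly the standing assumption \eqref{sir:disc:assump}. For the inductive step, observe first that, because $b, h > 0$ and $x_n, y_n > 0$, the common denominator $x_n + y_n(1 + bh)$ occurring in all three lines of \eqref{sir:discrete:explicit} is strictly positive, so the iterates are well defined. Then $x_{n+1} = x_n(x_n+y_n)/(x_n+y_n(1+bh))$ is a quotient of strictly positive numbers, hence $x_{n+1} > 0$; similarly $y_{n+1} = y_n(1+bh)(x_n+y_n)/\bigl((1+ch)(x_n+y_n(1+bh))\bigr) > 0$, using in addition $c, h > 0$; and $z_{n+1}$ is $z_n \geq 0$ plus $ch$ times that same positive quantity, so $z_{n+1} > 0$. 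This closes the induction and yields non-negativity of $x_n, y_n, z_n$ for every $n$ (indeed strict positivity of $x_n$ and $y_n$ for all $n$, and of $z_n$ for $n \geq 1$).

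For the conservation of the total population, I would add the three equations of \eqref{sir:discrete} in their original, undivided form: the terms $\pm\, bx_{n+1}y_n/(x_n+y_n)$ cancel, and so do $\mp\, c y_{n+1}$, leaving $(x_{n+1}-x_n)+(y_{n+1}-y_n)+(z_{n+1}-z_n) = 0$, that is, $x_{n+1}+y_{n+1}+z_{n+1} = x_n+y_n+z_n$. A trivial induction then gives $x_n+y_n+z_n = x_0+y_0+z_0 = N$ for all $n \in \mathbb{N}$.

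I do not expect a real obstacle here: the only point that needs (minimal) care is checking that the denominators in \eqref{sir:discrete:explicit} never vanish, which is automatic once the strict positivity of $x_n$ and $y_n$ is propagated by the induction; the algebraic cancellation underlying the conservation law is equally immediate.
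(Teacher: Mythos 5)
Your proposal is correct and follows essentially the same route as the paper: non-negativity is read off from the explicit form \eqref{sir:discrete:explicit} (the paper states this in one line, while you make the induction and the non-vanishing of the denominators explicit, which is a welcome refinement), and the conservation law is obtained exactly as in the paper by summing the three equations of \eqref{sir:discrete} and cancelling the transfer terms.
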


\begin{proof}
Let $(x_0, y_0, z_0)$ be given in agreement with \eqref{sir:disc:assump}.
Since, by definition, all parameters are non-negative, then all equations of system 
\eqref{sir:discrete:explicit}, $x_{n+1}$, $y_{n+1}$, $z_{n+1}$, are clearly non-negative. 
Let $n \in \mathbb{N}_0$ and define $N_n = x_n + y_n + z_n$. Adding the three 
equations of \eqref{sir:discrete}, we have
\begin{equation*}
\frac{N_{n+1} - N_n}{h} = 0 \Leftrightarrow N_{n+1} = N_n.
\end{equation*}
Thus, the population remains constant: $N_n = N$ for any $n \in \mathbb{N}_0$.
\end{proof}


\subsection{Equilibrium points}
\label{subsec2}

To obtain the equilibrium points of system \eqref{sir:discrete} or, 
equivalently, \eqref{sir:discrete:explicit},
one has to find the fixed points $(x^*,y^*,z^*) = F(x^*,y^*,z^*)$
of function 
$$
F(x,y,z) = 
\left(
\dfrac{x(x + y)}{x + y(1 + bh)},
\dfrac{y(1 + bh)(x + y)}{(1 + ch)(x + y(1 + bh))},
\dfrac{ch y(1 + bh)(x + y)}{(1 + ch)(x + y(1 + bh))} + z
\right).
$$
By doing so, it follows that the equilibrium points are 
$(\alpha, 0, N - \alpha)$, with $\alpha \in \mathbb{R}_0^+$, 
being in agreement with the known results of the 
continuous-time model \eqref{sir:continuous} in \cite{scales:solution}.


\subsection{Exact solution}
\label{subsec3}

Here we derive the exact solution of the discrete-time 
dynamical system \eqref{sir:discrete:explicit}. 

\begin{theorem}
\label{thm:2}	
The exact solution of system \eqref{sir:discrete:explicit} is given by
\begin{equation}
\label{solution:discrete}
\begin{cases}
x_n = x_0 \displaystyle\prod_{i=1}^n 
\frac{1 + \bar{\kappa} \xi^{i-1}}{1 + bh + \bar{\kappa}\xi^{i-1}},\\
y_n = \frac{y_0}{\xi^n} \displaystyle\prod_{i=1}^n 
\frac{1 + \bar{\kappa} \xi^{i-1}}{1 + bh + \bar{\kappa} \xi^{i-1}},\\
z_n = N - \left(x_0 + \frac{y_0}{\xi^n}\right)\displaystyle\prod_{i=1}^n 
\frac{1 + \bar{\kappa} \xi^{i-1}}{1 + bh + \bar{\kappa}\xi^{i-1}},
\end{cases}
\end{equation}
where $\bar{\kappa} = \frac{x_0}{y_0}$ and $\xi = \frac{1 + ch}{1 + bh}$.
\end{theorem}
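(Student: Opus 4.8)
The plan is to prove the formula \eqref{solution:discrete} by induction on $n$, working with the explicit recursion \eqref{sir:discrete:explicit}. The case $n=0$ is immediate since the empty product equals $1$, giving $x_0 = x_0$, $y_0 = y_0$, and $z_0 = N - (x_0 + y_0) = z_0$ by definition of $N$. The heart of the argument is the inductive step, and the key observation I would isolate first is that the ratio $y_n / x_n$ evolves very simply. Indeed, dividing the second equation of \eqref{sir:discrete:explicit} by the first, all the messy factors $(x_n+y_n)$ and $(x_n + y_n(1+bh))$ cancel and one gets
\begin{equation*}
\frac{y_{n+1}}{x_{n+1}} = \frac{1}{\xi}\,\frac{y_n}{x_n},
\end{equation*}
so that $y_n/x_n = (y_0/x_0)\,\xi^{-n}$, i.e.\ $x_n/y_n = \bar{\kappa}\,\xi^{n}$. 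This is exactly the quantity appearing in the products of \eqref{solution:discrete}, up to a shift of index.

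With that in hand, I would push through the $x$-equation. Assume $x_n = x_0 \prod_{i=1}^n \frac{1 + \bar\kappa \xi^{i-1}}{1 + bh + \bar\kappa \xi^{i-1}}$; then the first equation of \eqref{sir:discrete:explicit} says
\begin{equation*}
x_{n+1} = x_n \cdot \frac{x_n + y_n}{x_n + y_n(1+bh)} = x_n \cdot \frac{x_n/y_n + 1}{x_n/y_n + 1 + bh}
= x_n \cdot \frac{\bar\kappa \xi^{n} + 1}{\bar\kappa \xi^{n} + 1 + bh},
\end{equation*}
using the ratio formula with the inductive hypothesis. The new factor is precisely the $i = n+1$ term of the product, so $x_{n+1}$ has the claimed form. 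The $y$-equation is handled the same way, either directly from \eqref{sir:discrete:explicit} or, more cheaply, by combining the just-established formula for $x_{n+1}$ with $y_{n+1} = x_{n+1} \cdot (y_0/x_0)\,\xi^{-(n+1)}$, which instantly yields the stated expression for $y_n$. Finally, $z_n$ follows with no extra work from the conservation law $x_n + y_n + z_n = N$ proved in Proposition~\ref{thm:01}, since $x_n + y_n = \bigl(x_0 + y_0 \xi^{-n}\bigr)\prod_{i=1}^n \frac{1 + \bar\kappa\xi^{i-1}}{1 + bh + \bar\kappa\xi^{i-1}}$.

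I do not anticipate a genuine obstacle here; the proof is essentially bookkeeping once the ratio identity $x_n/y_n = \bar\kappa\,\xi^n$ is extracted. The only place to be careful is the index alignment in the product — checking that the factor picked up in going from step $n$ to step $n+1$ is indeed the $(n+1)$-st factor $\frac{1+\bar\kappa\xi^{n}}{1+bh+\bar\kappa\xi^{n}}$ and not an off-by-one shifted version — and confirming that $\xi \neq 0$ and all denominators $1 + bh + \bar\kappa\xi^{i-1}$ are strictly positive (which is clear from $b,c,h > 0$ and $x_0,y_0 > 0$), so that none of the divisions are illegitimate.
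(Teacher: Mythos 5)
Your proof is correct, and it follows the same basic strategy as the paper (induction on $n$ using the explicit recursion \eqref{sir:discrete:explicit}, with $z_n$ obtained from the conservation law of Proposition~\ref{thm:01}). The one genuine structural difference is that you isolate the ratio identity
\begin{equation*}
\frac{y_{n+1}}{x_{n+1}} = \frac{1+bh}{1+ch}\,\frac{y_n}{x_n}
\quad\Longrightarrow\quad
\frac{x_n}{y_n} = \bar{\kappa}\,\xi^{\,n}
\end{equation*}
as a standalone lemma, which (a) lets you replace $\frac{x_n+y_n}{x_n+y_n(1+bh)}$ by $\frac{1+\bar{\kappa}\xi^{n}}{1+bh+\bar{\kappa}\xi^{n}}$ in one line, and (b) gives the formula for $y_n$ for free from the one for $x_n$. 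The paper instead runs two parallel inductions, for $x_n$ and for $y_n$ separately, and recovers the same ratio information only implicitly by inverting the claimed formulas for $x_0$ and $y_0$ and back-substituting; your version is tighter and makes the mechanism of the cancellation visible. Your attention to the index alignment (the new factor being the $i=n+1$ term, i.e.\ $\frac{1+\bar{\kappa}\xi^{n}}{1+bh+\bar{\kappa}\xi^{n}}$) and to the positivity of all denominators covers the only points where the bookkeeping could go wrong.
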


\begin{proof}
We do the proof by induction. For $n = 1$, we have
\begin{equation}
\label{induction_x1}
x_1 = x_0\left(\frac{1 + \overline{\kappa}}{1 + bh + \overline{\kappa}}\right) 
= x_0\left(\frac{x_0 + y_0}{x_0 + y_0(1 + bh)}\right),\,
\text{because $\overline{\kappa} = \frac{x_0}{y_0}$,}
\end{equation}
and 
\begin{equation}
\label{induction_y1}
y_1 = \frac{y_0}{\xi}\left(\frac{1 + \overline{\kappa}}{1 + bh 
+ \overline{\kappa}}\right) = \frac{y_0(1 + bh)}{(1 + ch)} 
\left(\frac{y_0 + x_0}{x_0 + y_0(1+bh)}\right),\, 
\text{by the definition of $\overline{\kappa}$ and $\xi$}. 
\end{equation}
Both \eqref{induction_x1} and \eqref{induction_y1} are true from system
\eqref{sir:discrete:explicit}. Now, let us state the inductive hypothesis 
that \eqref{solution:discrete} holds true for a certain $n = m$. 
We want to prove that it remains valid for $n = m+1$. Thus,  
\begin{equation*}
\begin{aligned}
x_{m+1} &= x_0 \prod_{i=1}^{m+1} 
\frac{1 + \overline{\kappa}\xi^{i-1}}{1 + bh + \overline{\kappa}\xi^{i-1}}\\ 
&= x_0 \prod_{i=1}^{m} \frac{1 + \overline{\kappa}\xi^{i-1}}{1 + bh 
+ \overline{\kappa}\xi^{i-1}}\cdot \frac{1 + \overline{\kappa}\xi^m}{1 + bh 
+ \overline{\kappa}\xi^m} \\ 
&= x_m \left(\frac{1 + \overline{\kappa}\xi^m}{1 + bh 
+ \overline{\kappa}\xi^m}\right),\,\text{by inductive hypothesis}. 
\end{aligned}
\end{equation*}
Note that the first two equations of \eqref{solution:discrete} can be rewritten as 
\begin{equation}
\label{induction_x0}
x_0 = \frac{x_n}{\prod_{i=1}^n \frac{1 +\bar{\kappa} \xi^{i-1}}{1 + bh + \bar{\kappa}\xi^{i-1}}}
\end{equation}
and 
\begin{equation}
\label{induction_y0}
y_0 = \frac{y_n\xi^n}{\prod_{i=1}^n \frac{1 +\bar{\kappa} \xi^{i-1}}{1 + bh + \bar{\kappa}\xi^{i-1}}}. 
\end{equation}
Then, using \eqref{induction_x0} and \eqref{induction_y0}, and the definition 
of $\overline{\kappa}$, we get
\begin{equation}
\label{induction_xm1}
x_{m+1} = x_m\left(\frac{x_m + y_m}{x_m + y_m(1+bh)}\right). 
\end{equation}
Moreover, 
\begin{equation}
\begin{aligned}
y_{m+1} &= \frac{y_0}{\xi^{m+1}}\prod_{i=1}^{m+1} 
\frac{1+\overline{\kappa}\xi^{i-1}}{1+bh+\overline{\kappa}\xi^{i-1}} \\ 
&= \frac{y_0}{\xi^m}\cdot\frac{1}{\xi}\prod_{i=1}^m 
\frac{1+\overline{\kappa}\xi^{i-1}}{1+bh+\overline{\kappa}\xi^{i-1}} 
\cdot \frac{1 +\overline{\kappa}\xi^m}{1+bh+\overline{\kappa}\xi^m} \\ 
&= \frac{y_m}{\xi} \left(\frac{1 +\overline{\kappa}\xi^m}{1+bh
+\overline{\kappa}\xi^m} \right),\,\text{by inductive hypothesis}. 
\end{aligned}
\end{equation}
Likewise, using \eqref{induction_x0} and \eqref{induction_y0} 
and the definition of $\overline{\kappa}$ and $\xi$, we get
\begin{equation}
\label{induction_ym1}
y_{m+1} = \frac{y_m(1+bh)(x_m + y_m)}{(1+ch)(x_m + y_m(1+bh))}.
\end{equation}
To finish, we note that both \eqref{induction_xm1} and \eqref{induction_ym1} are 
also in agreement with system \eqref{sir:discrete:explicit}.
Furthermore, since $N = x_{n} + y_{n} + z_{n}$, that is, 
$z_{n} = N - x_{n} - y_{n}$, we have
\begin{equation*}
z_n = N - \left(x_0 + \frac{y_0}{\xi^n}\right)
\displaystyle\prod_{i=1}^n \frac{1 + \bar{\kappa} 
\xi^{i-1}}{1 + bh + \bar{\kappa} \xi^{i-1}},\, n \geq 1.
\end{equation*}
The proof is complete.
\end{proof}


\subsection{Long-term behavior}
\label{subsec4}

To begin, let us note that $x_n$, as stated in \eqref{solution:discrete}, 
can be rewritten as
\begin{equation}
\label{xn:proof}
x_n = x_0 \displaystyle \prod_{i = 1}^{n}\left( 
\frac{1}{1 + \frac{bh}{1 + \bar{\kappa}\xi^{i-1}}}\right),
\end{equation}
while $y_n$ can be rewritten as
\begin{equation}
\label{yn:proof}
y_n = y_0 \displaystyle \prod_{i=1}^{n} \left(\frac{1}{\xi} 
\cdot \frac{1 + \bar{\kappa} \xi^{i-1}}{1 + bh + \bar{\kappa}\xi^{i-1}} \right)
= y_0 \displaystyle \prod_{i=1}^{n} \left(\frac{1}{\xi 
+ \frac{bh\xi}{1 +\bar{\kappa}\xi^{i-1}}}\right).
\end{equation}

The reproduction number $\mathcal{R}_0$
is one of the most significant threshold
quantities used in epidemiology. It is well-known that 
$$
\mathcal{R}_0 = \frac{b}{c}
$$
for system \eqref{sir:continuous}.
The same happens for all coherent discretizations 
of \eqref{sir:continuous}, in particular to system
\eqref{sir:discrete} or \eqref{sir:discrete:explicit}.
Here we prove that
the extinction equilibrium is asymptotically stable
when $\mathcal{R}_0 \geq 1$ (Theorem~\ref{convergence:zero}); 
and the disease free equilibrium is asymptotically stable
when $\mathcal{R}_0 < 1$ (Theorem~\ref{thm:04}).

\begin{theorem}
\label{convergence:zero}
Let $\mathcal{R}_0 \geq 1$.
Then all solutions \eqref{solution:discrete} 
converge to the equilibrium $(0,0,N)$. 
\end{theorem}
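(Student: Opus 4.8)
The plan is to deduce the three limits directly from the closed form \eqref{solution:discrete} together with the conservation identity $x_n+y_n+z_n=N$ of Proposition~\ref{thm:01}. The only role of the hypothesis is the elementary equivalence $\mathcal{R}_0=b/c\ge 1 \iff b\ge c \iff \xi=\frac{1+ch}{1+bh}\in(0,1]$, so from now on one works under $0<\xi\le 1$. Note also that every denominator appearing below is strictly positive by \eqref{sir:disc:assump} and Proposition~\ref{thm:01}, so all the ratios I use are well defined and in fact $x_n,y_n>0$ for every $n$.

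For the susceptible component I would look at the one-step ratio read off from \eqref{solution:discrete}, namely $\frac{x_{n+1}}{x_n}=\frac{1+\bar{\kappa}\xi^{n}}{1+bh+\bar{\kappa}\xi^{n}}$. Since $s\mapsto\frac{1+s}{1+bh+s}$ is increasing on $[0,\infty)$ and $0<\bar{\kappa}\xi^{n}\le\bar{\kappa}$ (because $\xi\le1$), each such ratio is at most $\rho:=\frac{1+\bar{\kappa}}{1+bh+\bar{\kappa}}<1$; in particular $x_n$ is monotone decreasing. Hence $0< x_n\le x_0\rho^{\,n}\to0$.

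The delicate step — and the one I expect to be the main obstacle — is the infected component, because in \eqref{solution:discrete} the factor $1/\xi$ is $\ge 1$ when $\xi<1$, so a crude term-by-term estimate of the product defining $y_n$ need not decay. The remedy is the ratio test: from \eqref{solution:discrete} one gets $\frac{y_{n+1}}{y_n}=\frac{1}{\xi}\cdot\frac{1+\bar{\kappa}\xi^{n}}{1+bh+\bar{\kappa}\xi^{n}}$. If $\xi=1$ this equals $\rho<1$ for every $n$; if $\xi<1$ then $\xi^{n}\to0$, so the ratio converges to $\frac{1}{\xi(1+bh)}=\frac{1}{1+ch}<1$, using the identity $\xi(1+bh)=1+ch$. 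In both cases $\limsup_{n\to\infty}\frac{y_{n+1}}{y_n}<1$, whence $y_n\to0$. The key observation to spell out is that, although individual factors of the $y_n$-product may exceed $1$, the tail of the product behaves like a geometric sequence of ratio $\frac{1}{1+ch}<1$ once $\bar{\kappa}\xi^{n}$ has become negligible.

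Finally, by Proposition~\ref{thm:01} we have $z_n=N-x_n-y_n$, so $z_n\to N$. Combining the three limits yields $(x_n,y_n,z_n)\to(0,0,N)$, which is precisely the equilibrium $(\alpha,0,N-\alpha)$ with $\alpha=0$ identified in Section~\ref{subsec2}, completing the proof.
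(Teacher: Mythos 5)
Your proof is correct and follows essentially the same route as the paper: both bound the factors of the product defining $x_n$ uniformly by $\rho=\frac{1+\bar{\kappa}}{1+bh+\bar{\kappa}}<1$, and both observe that the factors of the $y_n$-product, though possibly exceeding $1$ at first when $\xi<1$, eventually fall below a constant less than $1$, yielding a geometric tail, with the case $\xi=1$ treated separately. The only cosmetic difference is that you package the second step as a $\limsup$ ratio test with explicit limiting ratio $\frac{1}{1+ch}$, whereas the paper computes an explicit index $p$ beyond which the factors $\tilde{a}_i$ drop below $1$.
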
  

\begin{proof}
We prove this result in two distinct parts. 

(i) Assume that $\mathcal{R}_0 = 1$, that is,
$b = c$. Then, $\xi = 1$ and from \eqref{xn:proof} we have
\begin{equation*}
x_n = x_0 \displaystyle \prod_{i = 1}^{n} 
\frac{1 + \bar{\kappa}}{1 + bh + \bar{\kappa}} 
= x_0 \left(\frac{1 + \bar{\kappa}}{1 + b h +\bar{\kappa}}\right)^n.
\end{equation*}
Since $\frac{1 + \bar{\kappa}}{1 + bh + \bar{\kappa}} <1$, 
then $\underset{n \to \infty}{\lim}x_{n} =0$. 
The same conclusion is taken regarding $y_{n}$. Moreover, 
since $z_{n} = N - x_{n}- y_{n}$, it is clear that under the condition 
$b = c$ all the solutions \eqref{solution:discrete} 
converge to the equilibrium $(0,0,N)$. 
		
(ii) Let us now consider that $\mathcal{R}_0 > 1$, that is, $b > c$.
It follows that $\xi = \frac{1 + ch}{1 + bh} < 1$. Considering \eqref{xn:proof}, 
we can define $\mathit{a}_{i}$ as
\begin{equation}
\label{a:xn}
\mathit{a}_{i} = \frac{1}{1 + \frac{bh}{1 + \bar{\kappa}\xi^{i-1}}}, 
\quad i \geq 1.
\end{equation}
Thus,
\begin{equation*}
1>\frac{1}{1 + \frac{bh}{1 + \bar{\kappa} \xi^{i-1}}} 
> \frac{1}{1 + \frac{bh}{1 + \bar{\kappa} \xi^i}}
\Rightarrow 1 >\mathit{a}_{i} > \mathit{a}_{i+1}, 
\quad i \geq 1.
\end{equation*}
This means that $0 < x_{n}=x_{0}\mathit{a}_1\mathit{a}_2
\ldots \mathit{a}_n < x_{0}(\mathit{a}_1)^n$. Therefore,  
$\underset{n\to \infty }{\lim} x_n = 0$.
Similarly, consider \eqref{yn:proof} and define $\tilde{a}_{i}$ as
\begin{equation}
\label{a:yn}
\tilde{a}_{i} = \frac{1}{\xi \left(1 + \frac{bh}{1 
+ \bar{\kappa}\xi^{i-1}}\right)}, \quad i \geq 1.
\end{equation}
From \eqref{a:yn} the following relations hold:
\begin{equation*}
\frac{1}{\xi + \frac{bh\xi}{1 + \bar{\kappa} \xi^{i-1}}} 
> \frac{1}{\xi + \frac{bh\xi}{1 + \bar{\kappa} \xi^i}} 
\Rightarrow \tilde{a}_{i} > \tilde{a}_{i+1}, 
\quad i\geq 1.
\end{equation*}
We can also see that $\tilde{a}_{i-1} < \frac{1}{\xi}$ 
for all $i \in \mathbb{N}$. Note that for $\tilde{a}_1$ 
to be smaller than 1, we need to impose strong conditions. In fact,
\begin{equation*}
\begin{aligned}
\tilde{a}_1 < 1 \Rightarrow \frac{1}{\xi\left(1+\frac{bh}{1 + \overline{\kappa}}\right)} < 1 
&\Leftrightarrow \frac{1 + \overline{\kappa}}{1 +\overline{\kappa} + bh} < \frac{1 + ch}{1 + bh},
\quad \text{by definition of $\xi$,} \\ 
&\Leftrightarrow c > \frac{b\overline{\kappa}}{1 + bh + \kappa}. 
\end{aligned} 
\end{equation*}
Nevertheless, we can also see that for a certain $p$, we have $\tilde{a}_{p+1}<1$. Indeed, 
\begin{equation*}
\begin{aligned}
\tilde{a}_p < 1 \Rightarrow \frac{1}{\xi\left(1 + \frac{bh}{1+\overline{\kappa}\xi^{p-1}}\right)} < 1 
&\Leftrightarrow 1 + \overline{\kappa}\xi^{p-1} < \xi(1 + \overline{\kappa}\xi^{p-1} + bh\xi) \\ 
&\Leftrightarrow \xi^{p-1}(\overline{\kappa} - \overline{\kappa}\xi) < \xi(bh + 1) - 1 \\ 
&\Leftrightarrow \xi^{p-1} < \frac{ch}{\overline{\kappa}(1 - \xi)},\quad \text{by definition of $\xi$.}
\end{aligned}
\end{equation*}
Then, applying the natural logarithm to both sides of the inequality, we get
\begin{equation*}
\ln (\xi^{p-1}) < \ln\left(\frac{ch}{\overline{\kappa}(1 - \xi)}\right)
\end{equation*}
and, since $\xi < 1$, it follows that
\begin{equation}
p > 1+ \dfrac{ \ln \left( \frac{c h}{(1-\xi) \bar{\kappa}} \right)}{\ln(\xi)}. 
\end{equation}
Thus, we can rewrite
\begin{equation}
y_{n}=y_{0}\tilde{a}_{1}\tilde{a}_2
\ldots\tilde{a}_n<y_{0}(\tilde{a}_1)^{p}\prod_{i=p+1}^{n} \tilde{a}_i < y_{0} \,M
\left(\tilde{a}_{p+1}\right)^{n-(p+1)}, 
\quad M=(\tilde{a}_1)^{p} \in \mathbb{R}^{+}.
\end{equation}
Since $\underset{n \to \infty}{\lim}\left(\tilde{a}_{p+1}\right)^{n-(p+1)}= 0$, 
this means that $\underset{n\to \infty }{\lim} y_n = 0$. By definition, 
we also have $\underset{n\to \infty }{\lim} z_n = N$. Therefore, 
all the solutions will converge to the equilibrium point $(0,0,N)$ when $b>c$. 

Given (i) and (ii), the proof is complete.
\end{proof}

\begin{theorem}
\label{thm:04}
Consider $\mathcal{R}_0 < 1$.
Then all solutions \eqref{solution:discrete} 
converge to the equilibrium $(\alpha,0,N  - \alpha)$ for some $\alpha \in \, ]0,N]$. 
\end{theorem}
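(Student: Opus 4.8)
The plan is to read the three limits straight off the closed form \eqref{solution:discrete}. Since $\mathcal{R}_0 < 1$ means $b < c$, we have $\xi = \frac{1+ch}{1+bh} > 1$, hence $\xi^{i-1} \to \infty$ and each factor of the product in \eqref{xn:proof},
\[
a_i = \frac{1}{1 + \frac{bh}{1 + \bar{\kappa}\xi^{i-1}}},
\]
satisfies $0 < a_i < 1$ and $a_i \to 1$. Because every factor lies in $(0,1)$, the partial products $P_n := \prod_{i=1}^n a_i$ are strictly decreasing and bounded below by $0$, so $x_n = x_0 P_n$ is decreasing; the crux is to show that $P_n$ does not tend to $0$, i.e. that $\lim P_n > 0$.

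For that I would pass to logarithms and use $\ln(1+u) \le u$ for $u > -1$:
\[
-\ln P_n = \sum_{i=1}^n \ln\!\left(1 + \frac{bh}{1+\bar{\kappa}\xi^{i-1}}\right)
\le \sum_{i=1}^n \frac{bh}{1+\bar{\kappa}\xi^{i-1}}
\le \frac{bh}{\bar{\kappa}} \sum_{i=1}^n \xi^{-(i-1)}
< \frac{bh}{\bar{\kappa}}\cdot\frac{\xi}{\xi-1},
\]
where the last step uses $\xi > 1$ to bound the geometric series. Thus $\{P_n\}$ is monotone and bounded away from $0$, so it converges to some $L > 0$, giving
\[
\alpha := \lim_{n\to\infty} x_n = x_0 \prod_{i=1}^\infty a_i = x_0 L .
\]
Since $0 < L < 1$ we get $0 < \alpha < x_0$, and because $y_0 > 0$ forces $x_0 < N$, this yields $\alpha \in \,]0,N]$ (indeed $\alpha < N$), as required.

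It then remains to identify the other two limits, which is immediate. From \eqref{yn:proof} (or the second line of \eqref{solution:discrete}) and $0 < P_n < 1$ we have $0 < y_n = \frac{y_0}{\xi^n} P_n < \frac{y_0}{\xi^n}$, and since $\xi > 1$ this forces $\lim_{n\to\infty} y_n = 0$. By Proposition~\ref{thm:01} the population is conserved, so $z_n = N - x_n - y_n \to N - \alpha$. Hence every solution \eqref{solution:discrete} converges to $(\alpha, 0, N-\alpha)$, which by the computation in Subsection~\ref{subsec2} is an equilibrium point of \eqref{sir:discrete}. The only non-routine ingredient is the estimate in the second paragraph — showing the infinite product for $x_n$ stays positive; unlike Theorem~\ref{convergence:zero}, here the limiting susceptible level $\alpha$ genuinely depends on the data $x_0,y_0,h,b,c$, so the statement can only assert convergence to \emph{some} $\alpha \in \,]0,N]$.
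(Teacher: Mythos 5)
Your proof is correct, and on the one genuinely delicate point it takes a different--and in fact tighter--route than the paper. The paper establishes $\lim_n x_n = \alpha > 0$ by noting that the factors $a_i$ are increasing and bounding $x_n > x_0 (a_1)^n$; since $(a_1)^n \to 0$, that lower bound by itself cannot certify that the decreasing sequence $x_n$ stays away from zero, so the paper's argument for $\alpha>0$ is essentially an appeal to the convergence of the infinite product without an explicit estimate. Your logarithmic argument supplies exactly the missing quantitative ingredient: from $\ln(1+u)\le u$ and $1+\bar{\kappa}\xi^{i-1}\ge \bar{\kappa}\xi^{i-1}$ you get
\[
-\ln P_n \;\le\; \frac{bh}{\bar{\kappa}}\sum_{i=1}^{n}\xi^{-(i-1)} \;<\; \frac{bh}{\bar{\kappa}}\,\frac{\xi}{\xi-1},
\]
so the monotone sequence $P_n$ is bounded below by the positive constant $\exp\bigl(-\tfrac{bh}{\bar{\kappa}}\tfrac{\xi}{\xi-1}\bigr)$ and hence converges to some $L>0$; this is the standard criterion that $\prod(1-u_i)$ converges to a nonzero limit iff $\sum u_i<\infty$, made explicit. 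For the infected compartment your bound $0<y_n=\tfrac{y_0}{\xi^n}P_n<\tfrac{y_0}{\xi^n}\to 0$ is also simpler than the paper's comparison $y_0\prod\tilde{a}_i<y_0(\tilde{a}_n)^n$ with $\tilde{a}_n<1/\xi$. The identification of the limit of $z_n$ via conservation of $N$ and the localization $\alpha\in\,]0,N]$ (indeed $\alpha<N$ since $y_0>0$) match the paper. In short: same statement, same skeleton, but your estimate for the positivity of the limiting susceptible fraction is the rigorous version of the step the paper only sketches.
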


\begin{proof}
Since $\mathcal{R}_0 < 1$, this means that
$b < c$ and $\xi = \frac{1 + ch}{1 + bh} > 1$.
Considering \eqref{xn:proof}, the following relations are valid:
\begin{equation*}
1 + \frac{bh}{1 + \kappa\xi^{i-1}} > 1 + \frac{bh}{1 + \kappa\xi^i}>1, 
\quad i \geq 1,
\end{equation*}
and, defining $\mathit{a}_{i}$ as \eqref{a:xn}, we have
\begin{equation*}
\frac{1}{1 + \frac{bh}{1 + \kappa\xi^{i-1}}} 
< \frac{1}{1 + \frac{bh}{1 + \kappa\xi^i}} 
\Rightarrow \mathit{a}_{i} < \mathit{a}_{i+1}, 
\quad i \geq 1,
\end{equation*}
which means that 
\begin{equation*}
x_{n}=\displaystyle x_{0}\prod_{i=1}^n a_i >x_{0} \prod_{i=1}^n a_1 
\Rightarrow x_{n}=x_{0}\prod_{i=1}^n a_i > x_{0}(a_1)^n.
\end{equation*}
Since $(a_1)^n \underset{n \to \infty}{\longrightarrow} 0$, 
then $x_{n}=x_{0}\prod_{i=1}^n a_i = \alpha$, 
where $\alpha > 0$. Thus, $\displaystyle \lim_{n \rightarrow \infty} x_n = \alpha$ 
with $0 < \alpha \leq N$.
	
Now, let us consider  \eqref{yn:proof} and \eqref{a:yn}. 
Since $\xi > 1$ and $1 + \frac{bh}{1 + \kappa\xi^{i-1}} > 1$, 
we have
\begin{equation*}
\tilde{a}_i = \frac{1}{\xi\left(1 + \frac{bh}{1 
+ \kappa\xi^{i-1}}\right)} < 1. 
\end{equation*}
Moreover,
\begin{equation*}
\frac{1}{\xi + \frac{bh\xi}{1 + \kappa\xi^{i-1}}} 
< \frac{1}{\xi + \frac{bh\xi}{1 + \kappa\xi^i}} <1 
\Rightarrow \tilde{a}_{i} < \tilde{a}_{i+1} < 1, 
\quad i \geq 1. 
\end{equation*}	
Therefore, $0 < y_{n}=y_{0} \prod_{i=1}^n \tilde{a}_i 
<  (\tilde{a}_n)^n$. Since $\tilde{a}_{n}<1$, $(\tilde{a}_{n})^{n}
\underset{n \to \infty}{ \longrightarrow} 0$, so 
$\underset{n \to \infty}{\lim} y_n = 0$. It is now easy 
to conclude that $\underset{n \to \infty}{\lim} z_n 
= N - \alpha$, leading to the conclusion that
all the solutions \eqref{solution:discrete} 
converge to the equilibrium $(\alpha, 0, N-\alpha)$.	
\end{proof}


\section{Illustrative examples}
\label{sec4}

This section is dedicated to some examples concerning 
systems \eqref{sir:continuous} and \eqref{solution:discrete} 
for different parameter values. Note that since we have 
analytical/exact solutions for the considered systems, there is 
no need for numerical methods: all our figures
are obtained with the exact formulas, with no error.
In all Figures~\ref{Fig.1}--\ref{Fig.3} we use $h = 0.05$.

In Figures~\ref{Fig.1} and \ref{Fig.2}, 
we consider the case where $b > c$ ($\mathcal{R}_0 > 1$)
and it can be observed that 
the solution always converges to the equilibrium point $(0,0,N)$.  

\begin{figure}[ht!]
\centering
\subfloat[Solution of the continuous-time model
\eqref{sir:continuous}]{\includegraphics[scale=0.4]{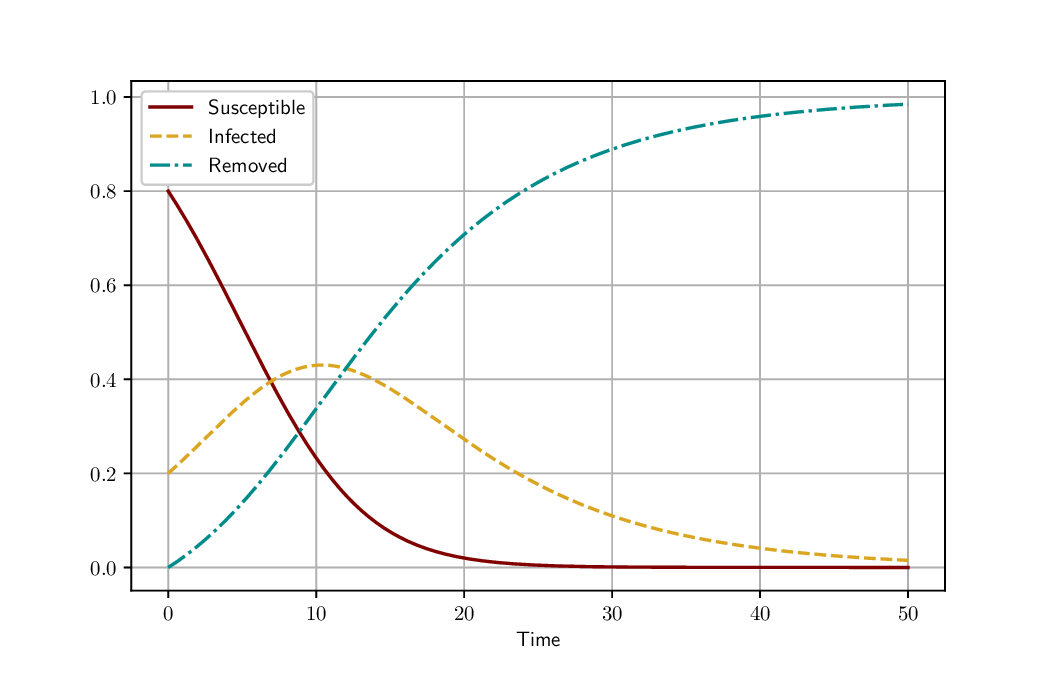}}
\subfloat[Solution \eqref{solution:discrete} of the discrete-time model 
\eqref{sir:discrete} (or \eqref{sir:discrete:explicit}) 
with $h = 0.05$]{\includegraphics[scale=0.4]{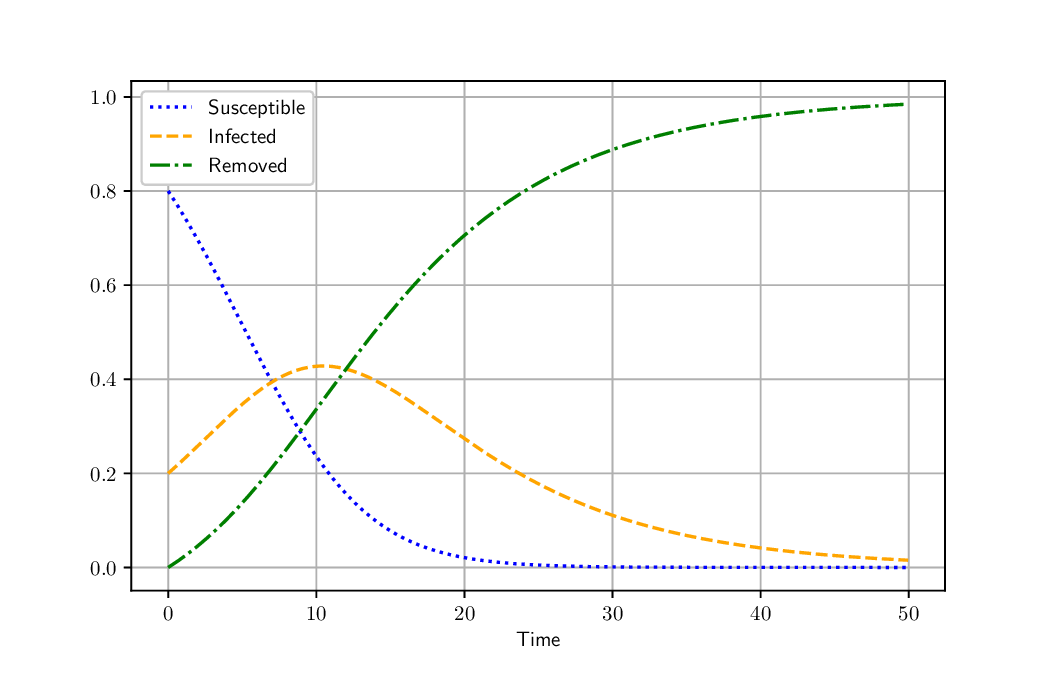}}
\caption{Susceptible, Infected and Removed individuals with 
$b = 0.3$, $c = 0.1$, $x_0 = 0.8$, $y_0 = 0.2$, and $z_0 = 0$.}
\label{Fig.1}
\end{figure}

In Figure \ref{Fig.2}, it is considered the infected for different 
values of $c<b$ and the convergence is always attained.
\begin{figure}[ht!]
\centering
\includegraphics[scale = 0.6]{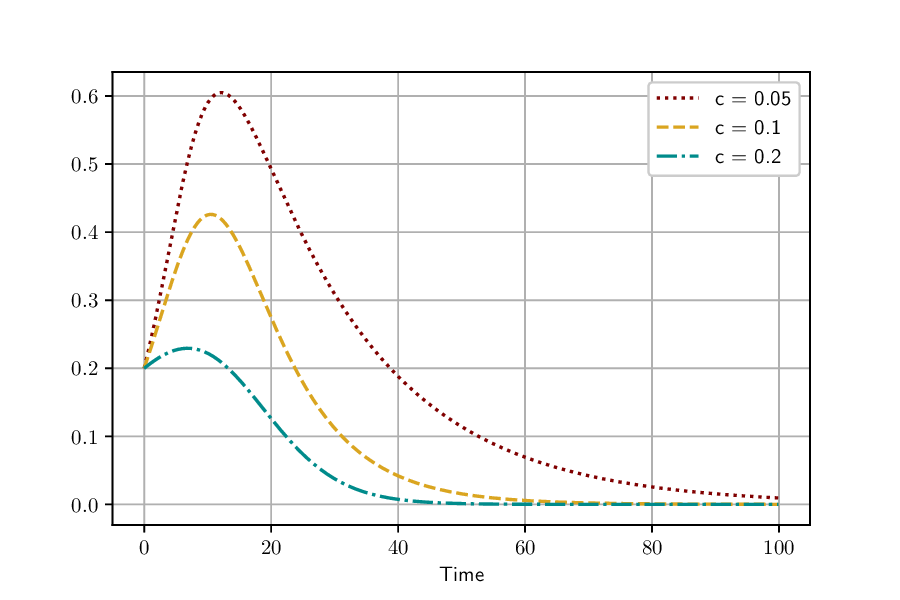} 
\caption{Infected individuals of system \eqref{sir:discrete} (or \eqref{sir:discrete:explicit})
with $b = 0.3$, $c \in \{0.05, 0.1, 0.2\}$, $h = 0.05$,
$x_0 = 0.8$, $y_0 = 0.2$, and $z_0 = 0$.}
\label{Fig.2}
\end{figure}

On the other hand, Figure~\ref{Fig.3} considers the case where 
$b < c$ ($\mathcal{R}_0 < 1$), 
where it is possible to see that the solution converges 
to $(\alpha, 0, N - \alpha)$, where $\alpha \in \mathbb{R}^+$. 
All the results are in agreement with Theorems~\ref{convergence:zero}
and \ref{thm:04}.

\begin{figure}[ht!]
\centering
\subfloat[Solution of the continuous-time model 
\eqref{sir:continuous}]{\includegraphics[scale=0.4]{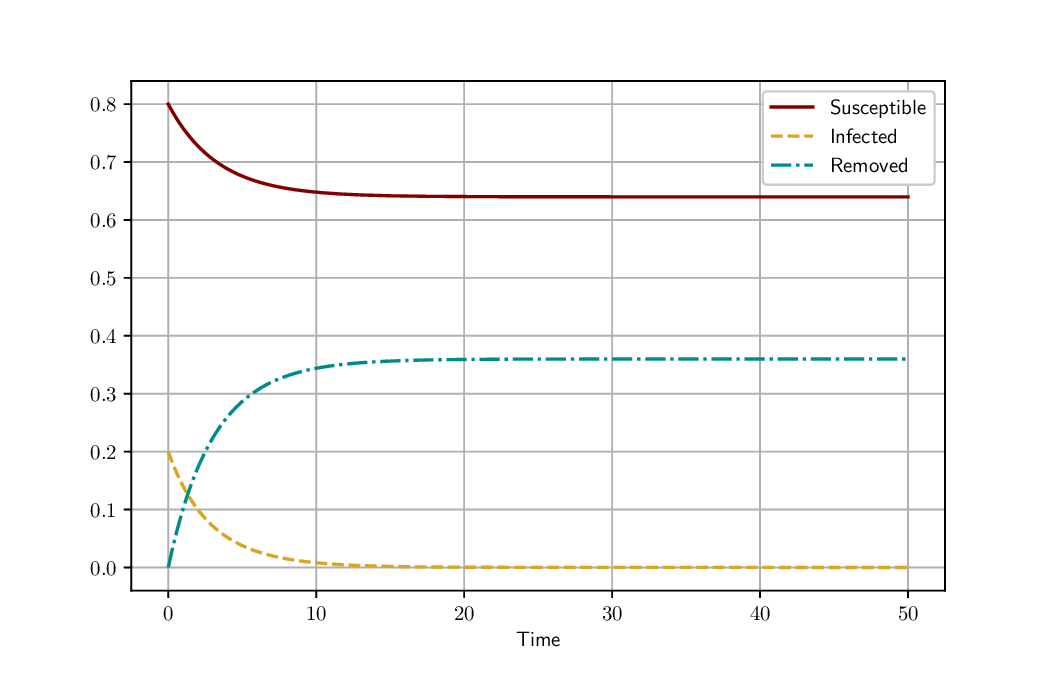}}
\subfloat[Solution \eqref{solution:discrete} 
of the discrete-time model \eqref{sir:discrete} 
(or \eqref{sir:discrete:explicit})
with $h = 0.05$]{\includegraphics[scale=0.4]{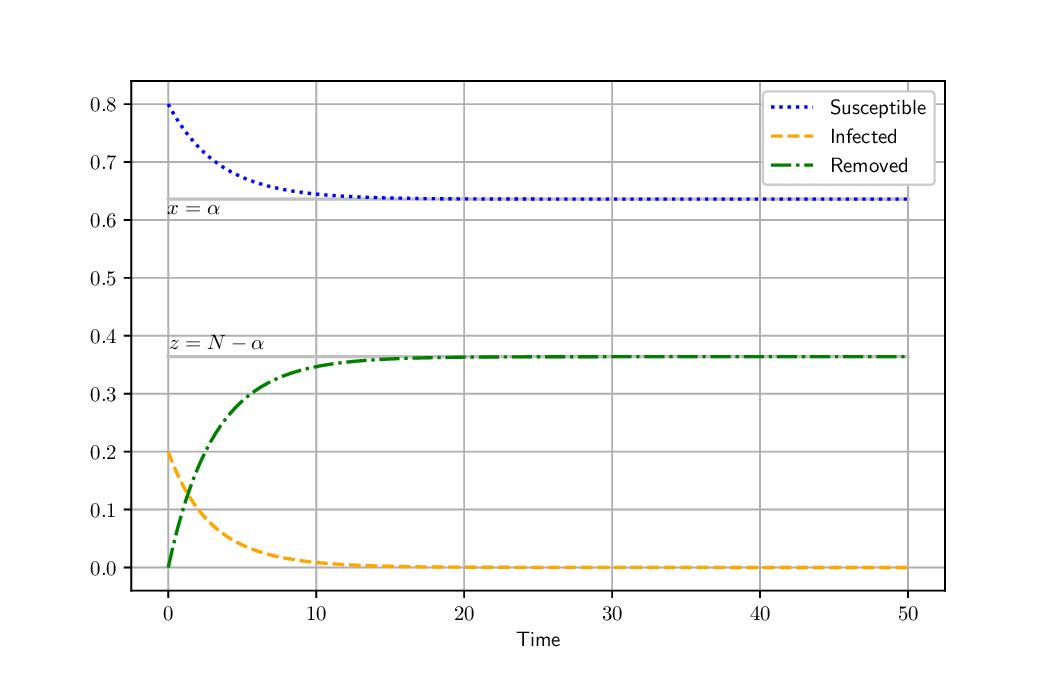}}
\caption{Susceptible, Infected and Removed individuals with 
$b = 0.3$, $c = 0.6$, $x_0 = 0.8$, $y_0 = 0.2$, and $z_0 = 0$ 
($N = 1$, $\alpha \approx 0.636$).}
\label{Fig.3}
\end{figure}

All our solutions were obtained by using $\mathsf{Python}$, 
which is nowadays a popular open choice for scientific computing \cite{MR4539988}.
 
 
\section{Conclusions}
\label{sec5}

By following Mickens' nonstandard finite difference scheme,
in this work we proposed a new discrete-time autonomous SIR model
that, in contrast with the one proposed in \cite{scales:solution},
guarantees the non-negativity of the solutions along time and, thus,
has biological meaning. The equilibrium points were found 
and the non-negativity and boundedness of solutions were proved,
which are in agreement with the standard continuous model.
Moreover, the exact solution of the proposed model was obtained 
and its local stability proved. Finally, some examples
were given that illustrate the obtained theoretical results.

For future work, it would be interesting to obtain the exact solution
of the proposed discrete-time model when the infection and recovery rates 
are not constant. This question is nontrivial and remains open.
It would be also interesting to see a concrete application 
of the proposed model for the analysis of a real biological problem.


\section*{CRediT authorship contribution statement}

All authors listed on the title page have contributed significantly to the work.

\section*{Declaration of competing interest}

The authors have no conflict of interest.

\section*{Data availability}

Not applicable.

\section*{Acknowledgements}

The authors are grateful to the Reviewers
for several constructive remarks, comments and suggestions,
that helped them to improve the quality and clearness of
the submitted manuscript.


\section*{Funding}

The authors were partially supported by 
the Portuguese Foundation for Science and Technology (FCT):
Lemos-Silva and Torres through
the Center for Research and Development in Mathematics 
and Applications (CIDMA), projects UIDB/04106/2020 and UIDP/04106/2020;
Vaz through the Center of Mathematics and Applications 
of Universidade da Beira Interior (CMA-UBI), 
project UIDB/00212/2020. Lemos-Silva was also supported by
the FCT PhD fellowship with reference UI/BD/154853/2023;
Torres within the project 
``Mathematical Modelling of Multiscale Control Systems: 
Applications to Human Diseases'' (CoSysM3), 
Reference 2022.03091.PTDC, financially supported 
by national funds (OE) through FCT/MCTES.




\begin{thebibliography}{10}

\bibitem{MR4712579} 
R. Anguelov and J. M.-S. Lubuma, 
A second-order nonstandard finite difference scheme 
and application to a model of biological and chemical processes,
{\em Contemp. Math.}, 793:1--15, 2024.

\bibitem{bailey}
N.~T.~J. Bailey.
{\em The Mathematical Theory of Infectious Diseases and Its Applications}.
Hafner Press [Macmillan Publishing Co.,Inc.], New York, 1975.
	
\bibitem{scales:solution}
M.~Bohner, S.~Streipert, and D.~F.~M. Torres.
Exact solution to a dynamic {SIR} model.
{\em Nonlinear Analysis: Hybrid Systems}, 32:228--238, 2019.
{\tt arXiv:1812.09759}
	
\bibitem{mickens:applied}
S.~M. Garba, A.~Gumel, and J.-S. Lubuma.
Dynamically-consistent non-standard finite difference method for an epidemic model.
{\em Mathematical and Computer Modelling}, 53(1):131--150, 2011.
	
\bibitem{gleissner:solution}
W.~Gleissner.
The spread of epidemics.
{\em Applied Mathematics and Computation}, 27(2):167--171, 1988.

\bibitem{MR4692483} 
M. T. Hoang and M. Ehrhardt, 
A second-order nonstandard finite difference method 
for a general Rosenzweig-MacArthur predator-prey model,
{\em J. Comput. Appl. Math.} 444, Paper No.~115752, 16~pp., 2024.

\bibitem{kermack:mckendrick}
W.~O. Kermack and A.~G. McKendrick.
A contribution to the mathematical theory of epidemics.
{\em Proc. R. Soc. Lond. A}, 115:700--721, 1927.
	
\bibitem{MR4539988}
M.~Lemos-Silva and D.~F.~M. Torres.
A note on a prey-predator model with constant-effort harvesting.
In {\em Dynamic control and optimization}, volume 407 of 
{\em Springer Proc. Math. Stat.}, pages 201--209. Springer, Cham, 2022.
{\tt arXiv:2202.04991}
	
\bibitem{LoliPiccolomini2020}
E.~Loli~Piccolomini and F.~Zama.
Monitoring {I}talian {COVID}-19 spread by a forced {SEIRD} model.
{\em PLOS ONE}, 15(8):e0237417, 2020.
	
\bibitem{MR3409181}
M.~Martcheva.
{\em An introduction to mathematical epidemiology}, 
Springer, New York, 2015.
	
\bibitem{mickens:book}
R.~E. Mickens.
{\em Nonstandard Finite Difference Models of Differential Equations}.
World Scientific Publishing Co. Pte. Ltd., 1994.
	
\bibitem{mickens}
R.~E. Mickens.
Nonstandard finite difference schemes for differential equations.
{\em J. Difference Equ. Appl.}, 8(9):823--847, 2002.
	
\bibitem{mickens1}
R.~E. Mickens.
Dynamic consistency: a fundamental principle for constructing
nonstandard finite difference schemes for differential equations.
{\em J. Difference Equ. Appl.}, 11(7):645--653, 2005.

\bibitem{MR4721474}
R. Mickens and T. Washington, 
Construction and analysis of a discrete heat equation using dynamic consistency: 
the meso-scale limit,
{\em Appl. Numer. Math.}, 19:114--122, 2024.

\bibitem{MR4696804}
E.-A. Shagdar and B. Balt, 
Extending nonstandard finite difference scheme for SIR epidemic model
{\em Springer Proc. Math. Stat.}, 434:187--200, 2023.

\bibitem{consistent}
A.~M. Stuart and A.~R. Humphries.
{\em Dynamical Systems and Numerical Analysis}.
Cambridge University Press, New York, 1996.
	
\bibitem{vaz:torres1}
S.~Vaz and D.~F.~M. Torres.
A discrete-time compartmental epidemiological model for {COVID-19}
with a case study for {P}ortugal.
{\em Axioms}, 10(4), Art.~314, 2021.
{\tt arXiv:2111.11860}
	
\bibitem{vaz:torres}
S.~Vaz and D.~F.~M. Torres.
A dynamically-consistent nonstandard finite difference scheme for the {SICA} model.
{\em Mathematical Biosciences and Engineering}, 18(4):4552--4571, 2021.
{\tt arXiv:2105.10826}
	
\end{thebibliography}
\end{document}